\newcommand{\Z}{\mathds{Z}}
\newcommand{\Q}{\mathds{Q}}
\newcommand{\N}{\mathds{N}}
\newcommand{\R}{\mathds{R}}
\newcommand{\p}{\phantom}
\newcommand{\q}{\quad}
\newtheorem{thm}{Theorem}[section]
\newtheorem{lem}[thm]{Lemma}
\newtheorem{kor}[thm]{Corollary}
\newtheorem{prop}[thm]{Proposition}
\theoremstyle{definition}
\newtheorem*{defi}{Definition}
\theoremstyle{remark}
\newtheorem*{rema}{Remark}
\title{On the sum of the first $n$ prime numbers} 
\author{Christian Axler}
\begin{document}

\maketitle

\begin{abstract}
In this paper we establish a general asymptotic formula for the sum of the first $n$ prime numbers, which leads to a generalization of the most accurate
asymptotic formula given by Massias and Robin in 1996.
\end{abstract}

\section{Introduction}
At the beginning of the 20th century, Landau \cite{el} showed that
\begin{displaymath}
\sum_{k \leq n} p_k \sim \frac{n^2}{2} \log n \q\q (n \to \infty).
\end{displaymath}
The currently most accurate asymptotic formula was proved by Massias and Robin \cite{mr} in 1996, namely
\begin{equation} \label{101}
\sum_{k \leq n} p_k = \frac{n^2}{2} \left( \log n + \log \log n - \frac{3}{2} + \frac{\log \log n - 5/2}{\log n} \right) + O \left( \frac{n^2(\log \log
n)^2}{\log^2 n} \right).
\end{equation}

\section{An asymptotic formula for $\sum_{k \leq n} p_k$}

In this paper we derive a general asymptotic formula for the sum of the first $n$ prime numbers. To prove this asymptotic formula, we introduce the
following definition.

\begin{defi}
Let $s,i,j,r \in \N_0$ with $j \geq r$. We define the integers $b_{s,i,j,r} \in \Z$ as follows:
\begin{itemize}
 \item If $j=r=0$, then
\begin{equation} \label{202}
b_{s,i,0,0} = 1.
\end{equation}
 \item If $j \geq 1$, then
\begin{equation} \label{203}
b_{s,i,j,j} = b_{s,i,j-1,j-1} \cdot (-(i-(j-1))).
\end{equation}
 \item If $j \geq 1$, then
\begin{equation} \label{204}
b_{s,i,j,0} = b_{s,i,j-1,0} \cdot (s+j-1).
\end{equation}
 \item If $j > r \geq 1$, then
\begin{equation} \label{205}
b_{s,i,j,r} = b_{s,i,j-1,r} \cdot (s+j-1) + b_{s,i,j-1,r-1} \cdot (-(i-(r-1))).
\end{equation}
\end{itemize}
\end{defi}

\begin{prop} \label{p201}
If $r > i$, then
\begin{displaymath}
b_{s,i,j,r} = 0.
\end{displaymath}
\end{prop}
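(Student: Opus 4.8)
The plan is to prove, by induction on $j$, the following reformulation: for every $j \in \N_0$ and every $r \in \N_0$ with $r \le j$ and $r > i$, one has $b_{s,i,j,r} = 0$. When $j \le i$ there is no admissible $r$, so the statement is vacuous; in particular it holds for $j=0$, which serves as the base of the induction. The first genuinely nonempty instance is $j=i+1$, where the only admissible index is $r=i+1=j$, and there clause \eqref{203} already gives $b_{s,i,i+1,i+1} = b_{s,i,i,i}\cdot(-(i-i)) = 0$; this is exactly where the vanishing originates, since $r$ first exceeds $i$ precisely when the factor $-(i-(r-1))$ becomes zero.

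For the inductive step I would fix $j \ge 1$, assume the claim for $j-1$, and take $r$ with $r \le j$ and $r > i$, noting $r \ge i+1 \ge 1$. Distinguish two cases according to which defining clause produces $b_{s,i,j,r}$. If $r=j$, then \eqref{203} gives $b_{s,i,j,j} = b_{s,i,j-1,j-1}\cdot(-(i-(j-1)))$; since $j>i$ we have $j-1 \ge i$, so either $j-1=i$ and the numerical factor is $0$, or $j-1>i$ and $b_{s,i,j-1,j-1}=0$ by the inductive hypothesis. If $r<j$, then $j>r\ge 1$ and \eqref{205} gives $b_{s,i,j,r} = b_{s,i,j-1,r}\cdot(s+j-1) + b_{s,i,j-1,r-1}\cdot(-(i-(r-1)))$; the first summand vanishes because $r \le j-1$ and $r>i$, so the inductive hypothesis applies to $b_{s,i,j-1,r}$, and for the second summand $r>i$ forces $r-1\ge i$, so either $r-1=i$ and the factor $-(i-(r-1))$ is $0$, or $r-1>i$ and (using $r-1\le j-2\le j-1$) the inductive hypothesis kills $b_{s,i,j-1,r-1}$. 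In every case $b_{s,i,j,r}=0$, which closes the induction.

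I do not expect a serious obstacle; the argument is pure bookkeeping on the recursion, and the only points demanding care are checking the index constraints whenever a defining clause or the inductive hypothesis is invoked — in particular $r \ge 1$ (needed for \eqref{205}), $j-1 \ge r-1$, and $r-1 \le j-1$ in the subcase $r<j$ — together with the observation that the boundary instance $r=j=i+1$ is governed by \eqref{203} rather than \eqref{205}. A more conceptual alternative would be to package the numbers as coefficients of the polynomials $f_j(x) = \sum_{r=0}^{j} b_{s,i,j,r}x^r$; clauses \eqref{202}--\eqref{205} then translate into $f_0 \equiv 1$ and $f_j(x) = (s+j-1-ix)f_{j-1}(x) + x^2 f_{j-1}'(x)$, and one verifies that the coefficient of $x^{i+1}$ on the right-hand side vanishes whenever $\deg f_{j-1} \le i$, giving $\deg f_j \le i$ for all $j$, which is the assertion. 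I would nonetheless present the direct inductive proof above, since it requires no additional machinery.
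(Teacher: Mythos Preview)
Your argument is correct. The organization, however, differs from the paper's: you run a single induction on $j$, proving in one sweep that every entry $b_{s,i,j,r}$ with $i<r\le j$ vanishes, and you split the inductive step into the diagonal case $r=j$ (handled by \eqref{203}) and the off-diagonal case $r<j$ (handled by \eqref{205}). The paper instead first isolates the diagonal vanishing $b_{s,i,k,k}=0$ for all $k\ge i+1$, then shows that the column $r=i+1$ vanishes by iterating \eqref{205} down to the diagonal, and finally inducts on $n$ in $r=i+n$ (equivalently, on $r-i$), at each step using the previously established column and again iterating \eqref{205} down to the diagonal. Your route is the more economical one: it avoids treating the diagonal separately and packages everything into one induction with a two-case split, whereas the paper's column-by-column argument makes the mechanism---the factor $-(i-(r-1))$ killing the passage from column $r-1$ to column $r$ at $r=i+1$---slightly more visible. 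Your polynomial reformulation $f_j(x)=(s+j-1-ix)f_{j-1}(x)+x^2f_{j-1}'(x)$ is a nice conceptual bonus that neither proof strictly needs but which explains \emph{why} degree $\le i$ is preserved.
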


\begin{proof}
Let $r>i$. From \eqref{203}, it follows that
\begin{equation} \label{206}
b_{s,i,i+1,i+1} = 0
\end{equation}
and hence
\begin{equation} \label{207}
b_{s,i,k,k} = 0
\end{equation}
for every $k \geq i+1$. We use \eqref{205} repeatedly and \eqref{206} to get
\begin{displaymath}
b_{s,i,k,i+1} = b_{s,i,i+1,i+1} \cdot (s+k-1) \cdot \ldots (s+(i+2)-1) = 0
\end{displaymath}
for every $k\geq i+2$. By using \eqref{206}, it follows that
\begin{equation} \label{208}
b_{s,i,k,i+1} = 0
\end{equation}
for every $k \geq i+1$. Next, we prove by induction that
\begin{equation} \label{209}
b_{s,i,k,i+n} = 0
\end{equation}
for every $n \in \N$ and every $k \geq i+n$. If $k = i+n$, then $b_{s,i,k,i+n} = 0$ by \eqref{207}. So, it suffices to prove \eqref{209} for every $n \in
\N$ and every $k \geq i+n+1$. If $n=1$, the claim follows from \eqref{208}. Now we write $k=i+n+t$ with an arbitrary $t \in \N$. By \eqref{205} and the
induction hypothesis, we obtain
\begin{displaymath}
b_{s,i,t+i+n,i+n} = b_{s,i,i+n,i+n} \cdot (s+(t+n+i)-1) \cdot \ldots \cdot (s+(i+n+1)-1).
\end{displaymath}
Since $b_{s,i,i+n,i+n} = 0$ by \eqref{207}, we get $b_{s,i,k,i+n} = 0$ and the result follows.
\end{proof}

\noindent
Let $m \in \N$. By \cite{cp}, there exist unique $a_{is} \in \Q$, where $a_{ss} = 1$ for all $1 \leq s \leq m$, such that
\begin{equation} \label{210}
p_n = n\left( \log n + \log \log n - 1 + \sum_{s=1}^m \frac{(-1)^{s+1}}{s\log^s n} \sum_{i=0}^s a_{is}(\log \log n)^i \right) + O (c_m(n)),
\end{equation}
where
\begin{displaymath}
c_m(x) = \frac{x(\log \log x)^{m+1}}{\log^{m+1} x}.
\end{displaymath}
We set
\begin{displaymath}
g(x) = \log x + \log \log x - \frac{3}{2}
\end{displaymath}
and
\begin{displaymath}
h_m(x) = \sum_{j=1}^{m} \frac{(j-1)!}{2^j \log^j x}.
\end{displaymath}

\noindent
The \emph{logarithmic integral} $\text{li}(x)$ is defined for every real $x \geq 2$ as
\begin{displaymath}
\text{li}(x) = \int_0^x \frac{dt}{\log t} = \lim_{\varepsilon \to 0} \left \{ \int_{0}^{1-\varepsilon}{\frac{dt}{\log t}} +
\int_{1+\varepsilon}^{x}{\frac{dt}{\log t}} \right \} \approx \int_2^x \frac{dt}{\log t} + 1.04516....
\end{displaymath}

\begin{lem} \label{l202}
We have
\begin{displaymath}
\emph{li}(x) = \sum_{j=1}^{n} \frac{(j-1)!}{\log^j x} + O \left( \frac{x}{\log^{n+1} x} \right).
\end{displaymath}
\end{lem}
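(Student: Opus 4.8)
The plan is to obtain the expansion by repeated integration by parts, exactly as in the classical derivation of the asymptotic series for $\tl(x)$, and then to reduce the error term to the estimate of a single leftover integral. (One should read the $j$-th summand as $(j-1)!\,x/\log^j x$: the factor $x$ is needed for the statement to be consistent with the error term $O(x/\log^{n+1}x)$.)

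First I would note that the additive constant in the definition of $\tl(x)$, together with the contribution of any fixed lower limit of integration, is $O(1)$, hence absorbed into $O(x/\log^{n+1}x)$; so it suffices to analyse $\int_2^x dt/\log t$. Then, starting from the elementary identity
\[
\frac{d}{dt}\!\left(\frac{t}{\log^k t}\right) = \frac{1}{\log^k t} - \frac{k}{\log^{k+1} t},
\qquad\text{i.e.}\qquad
\int_2^x \frac{dt}{\log^k t} = \frac{t}{\log^k t}\Big|_2^x + k\int_2^x \frac{dt}{\log^{k+1} t},
\]
I would iterate for $k=1,2,\dots,n$. A short induction on the number of iterations shows that the accumulated coefficient of $t/\log^j t$ equals $(j-1)!$ and that the coefficient of the last remaining integral is $n!$, so that
\[
\tl(x) = \sum_{j=1}^{n} \frac{(j-1)!\,x}{\log^j x} + n!\int_2^x \frac{dt}{\log^{n+1} t} + O(1).
\]

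It then remains to show $\int_2^x dt/\log^{n+1} t = O(x/\log^{n+1} x)$, which I would prove by the standard device of splitting the range at $\sqrt{x}$. On $[2,\sqrt{x}]$ the integrand is at most $1/\log^{n+1}2$, so that piece is $O(\sqrt{x})$; on $[\sqrt{x},x]$ the integrand is at most $1/\log^{n+1}\sqrt{x} = 2^{n+1}/\log^{n+1}x$, so that piece is at most $2^{n+1}(x-\sqrt{x})/\log^{n+1}x = O(x/\log^{n+1}x)$. Adding the two pieces and folding in the $O(1)$ term gives the claim. There is no genuine obstacle here: the $\sqrt{x}$-splitting estimate of the tail integral is the only step that is not purely mechanical, and it is entirely routine.
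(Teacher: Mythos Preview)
Your proposal is correct and follows exactly the route the paper intends: the paper's proof consists of the two words ``Integration by parts.'' You have simply spelled out the iteration and the standard $\sqrt{x}$-splitting bound for the remainder integral, and you also rightly observe that the summand must carry a factor of $x$ for the statement to be dimensionally consistent with the error term.
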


\begin{proof}
Integration by parts.
\end{proof}

\begin{lem} \label{lem403}
Let $x,a \in \R$ be such that $x \geq a \geq 2$. Then
\begin{displaymath}
\int_a^x \frac{t\, dt}{\log t} = \emph{li}(x^2) - \emph{li}(a^2).
\end{displaymath}
\end{lem}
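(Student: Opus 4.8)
The plan is to reduce the left‑hand integral to the defining integral of the logarithmic integral by the substitution $u = t^2$. First I would record that, since $a \geq 2$, the integrand $t/\log t$ is continuous on $[a,x]$ — indeed $\log t \geq \log 2 > 0$ throughout — so the left‑hand side is an ordinary Riemann integral with no singularity to be concerned about.

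Next I would carry out the change of variables $u = t^2$. Then $du = 2t\,dt$, so $t\,dt = \tfrac{1}{2}\,du$, while $\log t = \tfrac{1}{2}\log u$; and as $t$ ranges over $[a,x]$, the variable $u$ ranges over $[a^2,x^2]$. Substituting gives
\begin{displaymath}
\int_a^x \frac{t\,dt}{\log t} = \int_{a^2}^{x^2} \frac{\tfrac{1}{2}\,du}{\tfrac{1}{2}\log u} = \int_{a^2}^{x^2} \frac{du}{\log u}.
\end{displaymath}

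Finally, because $a^2 \geq 4 > 1$, the interval $[a^2,x^2]$ stays clear of the singularity of $1/\log u$ at $u = 1$, so on $[2,\infty)$ the function $\text{li}$ is a $C^1$ antiderivative of $1/\log u$, and the Fundamental Theorem of Calculus yields $\int_{a^2}^{x^2} du/\log u = \text{li}(x^2) - \text{li}(a^2)$, which is the claimed identity. I do not anticipate any genuine obstacle; the only point worth an explicit remark is that the principal‑value part of the definition of $\text{li}$ is irrelevant here since the whole computation takes place at arguments exceeding $1$.
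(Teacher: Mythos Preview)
Your argument is correct: the substitution $u=t^{2}$ turns $t\,dt/\log t$ into $du/\log u$, and since $a^{2}\ge 4>1$ the function $\text{li}$ is an ordinary antiderivative of $1/\log u$ on $[a^{2},x^{2}]$, so the Fundamental Theorem of Calculus finishes the job. The check that the principal-value portion of the definition of $\text{li}$ plays no role is appropriate and handled correctly.

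By contrast, the paper does not supply an argument at all; it simply writes ``See Dusart \cite{pd}.'' So your route is not so much different as strictly more informative: you give a two-line self-contained proof where the paper only gives a reference. The gain is that the reader need not track down Dusart's thesis for what is, as you show, an immediate change of variables; there is no cost, since nothing in your computation requires machinery beyond first-year calculus.
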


\begin{proof}
See Dusart \cite{pd}.
\end{proof}

\begin{lem} \label{lem404}
Let $m,m_0 \in \mathds{N}$ be such that $m \geq m_0$ and let $f$ be a continuous function on $[m_0, \infty)$ which is non-negativ and increasing on $[m,
\infty)$. Then
\begin{displaymath}
\sum_{k=m_0}^n f(k) = \int_{m_0}^n f(x) \, dx + O(f(n)).
\end{displaymath}
\end{lem}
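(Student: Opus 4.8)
The plan is to reduce the statement to the classical integral test on the tail $[m,\infty)$, where $f$ is monotone, and to absorb the (bounded) contribution of the initial segment $[m_0,m]$ into the error term.

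First I would split both sides at the index $m$, writing
\[
\sum_{k=m_0}^n f(k) = \sum_{k=m_0}^{m} f(k) + \sum_{k=m+1}^{n} f(k), \q \int_{m_0}^n f(x)\,dx = \int_{m_0}^{m} f(x)\,dx + \int_{m}^{n} f(x)\,dx .
\]
Since $f$ is continuous on the compact interval $[m_0,m]$, both $\sum_{k=m_0}^{m} f(k)$ and $\int_{m_0}^{m} f(x)\,dx$ are constants not depending on $n$, hence $O(1)$; as $f$ is non-negative and increasing on $[m,\infty)$ we have $f(n) \geq f(m+1)$ for all $n \geq m+1$, so once $f$ is not identically $0$ on the tail these constants are in fact $O(f(n))$. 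Thus it is enough to prove
\[
\sum_{k=m+1}^{n} f(k) = \int_{m}^{n} f(x)\,dx + O(f(n)).
\]

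For the tail I would use the monotonicity of $f$ on $[m,\infty)$: if $m+1 \leq k \leq n$, then for every $x \in [k-1,k]$ one has $f(k-1) \leq f(x) \leq f(k)$ (here $k-1 \geq m$ is exactly what is needed), and integrating over $[k-1,k]$ gives
\[
f(k-1) \leq \int_{k-1}^{k} f(x)\,dx \leq f(k).
\]
Summing this over $k = m+1, \dots, n$ telescopes the middle integral to $\int_m^n f(x)\,dx$ and, after rewriting $\sum_{k=m}^{n-1} f(k) = \sum_{k=m+1}^{n} f(k) + f(m) - f(n)$, yields
\[
0 \leq \sum_{k=m+1}^{n} f(k) - \int_{m}^{n} f(x)\,dx \leq f(n) - f(m) \leq f(n),
\]
using $f(m) \geq 0$ in the last step. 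This is precisely the required estimate, and combining it with the first paragraph finishes the proof.

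I do not expect any genuine obstacle: the argument is just the integral test with explicit bookkeeping of the error. The only subtlety is the passage from $O(1)$ to $O(f(n))$ for the initial block $[m_0,m]$, which needs $f$ to be bounded away from $0$ for large arguments; this is automatic once $f$ is non-negative, increasing, and not identically $0$ on $[m,\infty)$, and it is certainly satisfied by the functions (of the shape $x\log x$ and its perturbations) to which this lemma is applied later.
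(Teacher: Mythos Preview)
Your argument is correct and is precisely the approach the paper has in mind: the paper's entire proof reads ``We estimate the integral by upper and lower sums,'' and your write-up is a careful execution of exactly that idea, including the honest remark about the $O(1)\to O(f(n))$ step for the initial block.
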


\begin{proof}
We estimate the integral by upper and lower sums.
\end{proof}

\noindent
The following theorem yields a general asymptotic formula for the sum of the first $n$ primes.

\begin{thm} \label{thm402}
For every $n \in \N$,
\begin{displaymath}
\sum_{k \leq n} p_k = \frac{n^2}{2} \left( g(n) - h_m(n) + \sum_{s=1}^{m} \frac{(-1)^{s+1}}{s\log^sn} \sum_{i=0}^s a_{is} \sum_{j=0}^{m-s}
\sum_{r=0}^{\min\{i,j\}} \frac{b_{s,i,j,r}(\log \log n)^{i-r}}{2^j\log^jn} \right)+ O (nc_m(n)).
\end{displaymath}
\end{thm}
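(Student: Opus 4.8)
The plan is to deduce the formula from the asymptotic expansion \eqref{210} for $p_n$ by replacing the sum by an integral and then evaluating that integral term by term. Write $F(x)$ for the main term on the right of \eqref{210}, so that $F(x) = x\log x + x\log\log x - x + \sum_{s=1}^m \frac{(-1)^{s+1}}{s}\sum_{i=0}^s a_{is}\frac{x(\log\log x)^i}{\log^s x}$ and $p_n = F(n) + O(c_m(n))$. Fix $m_0$ large enough that $F$ is non-negative and increasing on $[m_0,\infty)$. Since $c_m$ is increasing we have $\sum_{k\le n}O(c_m(k)) = O(nc_m(n))$, and since $F(n) = O(n\log n) = O(nc_m(n))$, Lemma \ref{lem404} gives $\sum_{m_0\le k\le n}F(k) = \int_{m_0}^n F(x)\,dx + O(nc_m(n))$, the finitely many terms with $k < m_0$ contributing $O(1)$. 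Hence it suffices to show that $\int_{m_0}^n F(x)\,dx$ equals the right-hand side of the theorem up to an error $O(nc_m(n))$.

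For the elementary part, $\int (t\log t - t)\,dt = \frac{t^2}{2}\log t - \frac{3t^2}{4}$ and, integrating by parts, $\int t\log\log t\,dt = \frac{t^2}{2}\log\log t - \frac12\int \frac{t\,dt}{\log t}$. By Lemma \ref{lem403} the last integral between $m_0$ and $x$ equals $\mathrm{li}(x^2) - \mathrm{li}(m_0^2)$, and Lemma \ref{l202} with $m$ terms gives $\tfrac12\mathrm{li}(x^2) = \frac{x^2}{2}h_m(x) + O\!\left(x^2/\log^{m+1}x\right)$. Adding the three contributions and using $n^2/\log^{m+1}n = O(nc_m(n))$, I obtain $\int_{m_0}^n (t\log t + t\log\log t - t)\,dt = \frac{n^2}{2}\bigl(g(n) - h_m(n)\bigr) + O(nc_m(n))$, which accounts for the first two terms of the bracket.

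The main work is the family of integrals $J(b,a) := \int \frac{t(\log\log t)^a}{\log^b t}\,dt$. Integration by parts with $u = (\log\log t)^a/\log^b t$ and $dv = t\,dt$ yields the exact identity $J(b,a) = \frac{t^2(\log\log t)^a}{2\log^b t} + \frac b2 J(b+1,a) - \frac a2 J(b+1,a-1)$. I would iterate this $m-s+1$ times starting from $J(s,i)$, keeping the boundary terms and expanding the two resulting integrals at each stage. An induction on the number of steps shows that the boundary term produced at level $j$ is $\frac{t^2}{2}\cdot\frac{1}{2^j\log^{s+j}t}\sum_{r\ge 0} b_{s,i,j,r}(\log\log t)^{i-r}$: the accumulated coefficients satisfy the base case \eqref{202} and the recursions \eqref{203}--\eqref{205}, because one branch of the parts-identity multiplies by $(s+j-1)$ and the other by $-(i-(r-1))$ (one also tracks the coefficients of the surviving integrals, which obey the same recursion). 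Proposition \ref{p201} then shows that the branch differentiating $(\log\log t)^i$ more than $i$ times contributes nothing, so the inner sum may be cut at $r = \min\{i,j\}$. The integrals left over after the last step are finite combinations of $\int \frac{t(\log\log t)^a}{\log^{m+1}t}\,dt$ with $a\le i\le s\le m$; as these integrands are increasing, each such integral between $m_0$ and $n$ is $O\!\bigl(n^2(\log\log n)^m/\log^{m+1}n\bigr) = O(nc_m(n))$. Therefore $[J(s,i)]_{m_0}^n = \frac{n^2}{2}\sum_{j=0}^{m-s}\sum_{r=0}^{\min\{i,j\}}\frac{b_{s,i,j,r}(\log\log n)^{i-r}}{2^j\log^{s+j}n} + O(nc_m(n))$.

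Finally, I would multiply this by $\frac{(-1)^{s+1}}{s}a_{is}$, sum over $0\le i\le s\le m$, add the elementary part from the second paragraph, and factor out $\frac{n^2}{2}$; using $\log^{s+j}n = \log^s n\cdot\log^j n$ this reproduces exactly the bracketed expression of the theorem, while all error terms and lower-limit constants collected along the way are $O(nc_m(n))$. The main obstacle is the bookkeeping in the third paragraph: making the repeated integration by parts match the four-case recursion defining $b_{s,i,j,r}$, and verifying that the remainder integrals are genuinely of smaller order than $nc_m(n)$.
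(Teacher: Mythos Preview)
Your proposal is correct and follows essentially the same route as the paper: convert the sum to $\int F(x)\,dx$ via Lemma~\ref{lem404}, handle the $x\log x - x$ and $x\log\log x$ pieces directly (the latter through Lemmas~\ref{lem403} and~\ref{l202} to bring in $h_m$), and then iterate the integration-by-parts identity $J(b,a)=\tfrac{t^2(\log\log t)^a}{2\log^b t}+\tfrac{b}{2}J(b+1,a)-\tfrac{a}{2}J(b+1,a-1)$ so that the accumulated coefficients reproduce the recursion \eqref{202}--\eqref{205} for $b_{s,i,j,r}$, with Proposition~\ref{p201} truncating the $r$-sum. The only cosmetic differences are your choice of lower limit $m_0$ versus the paper's $3$, and that the paper writes out the inductive step \eqref{gl415}--\eqref{gl417} explicitly where you sketch it.
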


\begin{proof}
We set
\begin{displaymath}
\tau(x) = x\left( \log x + \log \log x - 1 + \sum_{s=1}^m \frac{(-1)^{s+1}}{s\log^s x} \sum_{i=0}^s a_{is}(\log \log x)^i \right).
\end{displaymath}
By \eqref{210}, we obtain
\begin{displaymath}
p_k = \tau(k) + O(c_m(k)).
\end{displaymath}
Using $\tau(n) \sim n\log n$ as $n \to \infty$ and Lemma \ref{lem404}, we get
\begin{equation} \label{211}
\sum_{k \leq n} p_k = \sum_{k=3}^n \tau(k) + O ( nc_m(n)) = \int_3^n \tau(x)\, dx + O(nc_m(n)).
\end{equation}
First, we integrate the first three terms of $\tau(x)$. We have
\begin{displaymath}
\int_3^n (x \log x - x) \, dx = \frac{n^2 \log n}{2} - \frac{3n^2}{4} + O(1).
\end{displaymath}
Next, using integration by parts, Lemma \ref{l202} and Lemma \ref{lem403}, we get
\begin{displaymath}
\int_3^n x \log \log x \, dx = \frac{n^2 \log \log n}{2} - \frac{n^2}{2}\, h_m(n) + O \left( \frac{n^2}{\log^{m+1} n} \right).
\end{displaymath}
Hence, by \eqref{211},
\begin{equation} \label{gl414}
\sum_{k \leq n} p_k = \frac{n^2}{2}(g(n) - h_m(n)) + \sum_{s=1}^m \frac{(-1)^{s+1}}{s} \sum_{i=0}^s a_{is} \int_3^n \frac{x(\log \log x)^i}{\log^s x}\, dx +
O(nc_m(n)).
\end{equation}
Now let $1 \leq s \leq m$ and $0 \leq i \leq s$. We prove by induction that for every $t \in \N_0$,
\begin{align} \label{gl415}
\int_3^n \frac{x(\log \log x)^i}{\log^s x}\, dx & = \sum_{j=0}^t \sum_{r=0}^{\min\{i,j\}} \frac{b_{s,i,j,r}n^2(\log \log n)^{i-r}}{2^{j+1}\log^{s+j}n} \nonumber \\
& \p{\q\q} + \int_3^n \sum_{r=0}^{\min\{i,t+1\}} \frac{b_{s,i,t+1,r}x(\log \log x)^{i-r}}{2^{t+1}\log^{s+t+1}x} \,dx + O(1).
\end{align}
Integration by parts gives
\begin{displaymath}
\int_3^n \frac{x(\log \log x)^i}{\log^s x}\, dx = \frac{n^2(\log \log n)^i}{2\log^s n} - \frac{i}{2} \int_3^n \frac{x(\log \log x)^{i-1}}{\log^{s+1} x}\, dx + \frac{s}{2} \int_3^n \frac{x(\log \log x)^i}{\log^{s+1} x}\, dx + O(1),
\end{displaymath}
so \eqref{gl415} holds for $t=0$. By induction hypothesis, we get
\begin{displaymath}
\int_3^n \frac{x(\log \log x)^i}{\log^s x}\, dx = \sum_{j=0}^{t-1} \sum_{r=0}^{\min\{i,j\}} \frac{b_{s,i,j,r}n^2(\log \log n)^{i-r}}{2^{j+1}\log^{s+j}n} +
\sum_{r=0}^{\min\{i,t\}} \int_3^n \frac{b_{s,i,t,r}x(\log \log x)^{i-r}}{2^t\log^{s+t}x} \,dx + O(1).
\end{displaymath}
Using integration by parts of the integral on the right hand side, we obtain
\begin{align} \label{gl416}
\int_3^n \frac{x(\log \log x)^i}{\log^s x}\, dx & = \sum_{j=0}^t \sum_{r=0}^{\min\{i,j\}} \frac{b_{s,i,j,r}n^2(\log \log n)^{i-r}}{2^{j+1}\log^{s+j}n}
\nonumber \\
& \p{\q\q} + \int_3^n \sum_{r=0}^{\min\{i,t\}} \frac{b_{s,i,t,r}((s+t)(\log \log x)^{i-r} - (i-r)(\log \log x)^{i-r-1})x}{2^{t+1}\log^{s+t+1}x} \,dx
\nonumber \\
& \p{\q\q} + O(1).
\end{align}
Since
\begin{align*}
\sum_{r=0}^{\min\{i,t\}} & b_{s,i,t,r}((s+t)(\log \log x)^{i-r} - (i-r)(\log \log x)^{i-r-1}) \nonumber\\
& \p{\q\q} = \sum_{r=1}^{\min\{i,t\}} ( b_{s,i,t,r}(s+t) - b_{s,i,t,r-1}(i-(r-1)))(\log \log x)^{i-r} + b_{s,i,t,0}(s+t)(\log \log x)^i \nonumber\\
& \p{\q\q\q\q\q\q} - b_{s,i,t,\min\{i,t\}}(i-\min\{i,t\})(\log \log x)^{i-(\min\{i,t\}+1)},
\end{align*}
we can use \eqref{204} and \eqref{205} to get
\begin{align} \label{gl417}
\sum_{r=0}^{\min\{i,t\}} & b_{s,i,t,r}((s+t)(\log \log x)^{i-r} - (i-r)(\log \log x)^{i-r-1}) \nonumber \\
& \p{\q\q} = \sum_{r=0}^{\min\{i,t\}} b_{s,i,t+1,r}(\log \log x)^{i-r} - b_{s,i,t,\min\{i,t\}}(i-\min\{i,t\})(\log \log x)^{i-(\min\{i,t\}+1)}.
\end{align}
It is easy to see that
\begin{displaymath}
- b_{s,i,t,\min\{i,t\}}(i-\min\{i,t\}) = b_{s,i,t+1,\min\{i,t\}+1}.
\end{displaymath}
Hence by \eqref{gl417} we obtain
\begin{displaymath}
\sum_{r=0}^{\min\{i,t\}} b_{s,i,t,r}((s+t)(\log \log x)^{i-r} - (i-r)(\log \log x)^{i-r-1}) = \sum_{r=0}^{\min\{i,t\}+1} b_{s,i,t+1,r}(\log \log x)^{i-r}.
\end{displaymath}
Since $b_{s,i,t+1,i+1} = 0$ for $t \geq i$ by Proposition \ref{p201}, it follows that
\begin{displaymath}
\sum_{r=0}^{\min\{i,t\}} b_{s,i,t,r}((s+t)(\log \log x)^{i-r} - (i-r)(\log \log x)^{i-r-1}) = \sum_{r=0}^{\min\{i,t+1\}} b_{s,i,t+1,r}(\log \log x)^{i-r}.
\end{displaymath}
Using \eqref{gl416}, we obtain \eqref{gl415}. Now we choose $t=m-s$ in \eqref{gl415} and we get
\begin{displaymath}
\int_3^n \frac{x(\log \log x)^i}{\log^s x}\, dx = \sum_{j=0}^{m-s} \sum_{r=0}^{\min\{i,j\}} \frac{b_{s,i,j,r}n^2(\log \log n)^{i-r}}{2^{j+1}\log^{s+j}n} +
O \left( \frac{n^2(\log \log n)^i}{\log^{m+1}n} \right).
\end{displaymath}
We substitute this in \eqref{gl414} to obtain
\begin{align*}
\sum_{k \leq n} p_k & = \frac{n^2}{2}(g(n) - h_m(n)) + \sum_{s=1}^m \frac{(-1)^{s+1}}{s} \sum_{i=0}^s a_{is} \sum_{j=0}^{m-s} \sum_{r=0}^{\min\{i,j\}}
\frac{b_{s,i,j,r}n^2(\log \log n)^{i-r}}{2^{j+1}\log^{s+j}n} \\
& \p{\q\q} + O \left( \sum_{s=1}^m \frac{(-1)^{s+1}}{s} \sum_{i=0}^s a_{is} \frac{n^2(\log \log n)^i}{\log^{m+1}n} \right) + O(nc_m(n)),
\end{align*}
and our theorem is proved.
\end{proof}

\noindent
The following corollary generalizes the asymptotic formula \eqref{101}.

\begin{kor} \label{k205}
Let $m \in \N$. Then there exist unique monic polynomials $T_s \in \Q[x]$, where $1 \leq s \leq m$ and $\emph{deg}(T_s) = s$, such that
\begin{displaymath}
\sum_{k \leq n} p_k = \frac{n^2}{2} \left( \log n + \log \log n - \frac{3}{2} + \sum_{s=1}^m \frac{(-1)^{s+1}T_s(\log \log n)}{s\log^s n} \right) +
O(nc_m(n)).
\end{displaymath}
The polynomials $T_s$ can be computed explicitly. In particular, $T_1(x) = x - 5/2$ and $T_2(x) = x^2 - 7x + 29/2$.
\end{kor}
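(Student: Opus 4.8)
The plan is to start from the formula in Theorem~\ref{thm402} and collect, for each fixed power $\log^{-S}n$ with $1\le S\le m$, all terms that contribute to it. In the triple sum of Theorem~\ref{thm402}, a summand with indices $(s,i,j,r)$ carries the factor $\log^{-(s+j)}n$, so it feeds into the $\log^{-S}n$ term exactly when $s+j=S$; as $(s,j)$ ranges over $1\le s\le m$, $0\le j\le m-s$, the pair $(s,S)$ with $j=S-s$ runs over $1\le s\le S\le m$. Adding the contribution $-(S-1)!/(2^S\log^Sn)$ coming from $-h_m(n)$, this leads to the definition
\begin{displaymath}
T_S(x) := \frac{(-1)^SS!}{2^S} + \sum_{s=1}^{S}\frac{(-1)^{S-s}S}{s\,2^{S-s}}\sum_{i=0}^{s}a_{is}\sum_{r=0}^{\min\{i,\,S-s\}}b_{s,i,S-s,r}\,x^{i-r},
\end{displaymath}
so that, after reindexing, the bracket in Theorem~\ref{thm402} equals $\log n+\log\log n-3/2+\sum_{S=1}^{m}\frac{(-1)^{S+1}}{S\log^Sn}T_S(\log\log n)$ and the stated formula follows with the same error term $O(nc_m(n))$. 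Since $a_{is}\in\Q$ and $b_{s,i,j,r}\in\Z$, each $T_S$ lies in $\Q[x]$.

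Next I would check that $T_S$ is monic of degree exactly $S$. In the formula above the exponent $i-r$ satisfies $i-r\le i\le s\le S$, and $i-r=S$ forces $s=S$, $i=S$, $r=0$; this single summand contributes $\frac{(-1)^0S}{S\cdot 2^0}\,a_{SS}\,b_{S,S,0,0}\,x^S$, whose coefficient is $1$ because $a_{SS}=1$ and $b_{S,S,0,0}=1$ by \eqref{202}. Hence $\deg T_S=S$ and $T_S$ is monic. The explicit values $T_1(x)=x-5/2$ and $T_2(x)=x^2-7x+29/2$ then follow by inserting the known coefficients $a_{is}$ from \eqref{210} and the values $b_{s,i,j,r}$ computed from \eqref{202}--\eqref{205}; this is a finite routine calculation.

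For uniqueness, suppose monic polynomials $\widetilde T_S\in\Q[x]$ with $\deg\widetilde T_S=S$ also satisfied the stated identity. Writing $Q_S=T_S-\widetilde T_S$ and subtracting the two asymptotic formulas gives $\sum_{S=1}^{m}\frac{(-1)^{S+1}Q_S(\log\log n)}{S\log^Sn}=O\bigl((\log\log n)^{m+1}/\log^{m+1}n\bigr)$. Multiplying by $\log n$ and letting $n\to\infty$, every term with $S\ge2$ tends to $0$, so $Q_1(\log\log n)\to0$; since $\log\log n\to\infty$, the polynomial $Q_1$ vanishes identically. Iterating (multiply by $\log^2n$, then $\log^3n$, and so on) yields $Q_S\equiv0$ for all $S$, which gives uniqueness.

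The only real obstacle is the bookkeeping: carrying out the reindexing $s+j=S$ without dropping or double-counting summands, and verifying that the top power $x^S$ of $T_S$ is produced by the single summand $(s,i,r)=(S,S,0)$, so that monicity is not spoiled by another term of the same degree. Rationality, the error term, and the uniqueness argument are then immediate.
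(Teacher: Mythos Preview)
Your argument is correct and follows essentially the same route as the paper: you read off the polynomials $T_S$ by regrouping the formula of Theorem~\ref{thm402} according to the power of $\log n$, and you establish monicity via the single top-degree contribution from $a_{SS}=1$ and $b_{S,S,0,0}=1$, which is exactly what the paper invokes (much more tersely). Your explicit uniqueness argument is a welcome addition that the paper does not spell out.
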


\begin{proof}
Since $a_{ss} = 1$ and $b_{s,s,0,0} = 1$, the first claim follows from Theorem \ref{thm402}. By using \cite{cp} and setting $m=2$ in Theorem \ref{thm402},
we obtain the polynomials $T_1$ and $T_2$.
\end{proof}

\begin{rema}
The first part of Corollary \ref{k205} was already proved by Sinha \cite{sin} in 2010. Due to a calculation error, he gave the polynomials $T_1(x) = x -
3$ and $T_2(x) = x^2 - 7x + 27/2$.
\end{rema}


\end{document}